\documentclass[12pt,a4paper]{article} 
\usepackage{chngcntr}
\usepackage{amsmath}
\usepackage{amssymb}
\usepackage{amsthm}
\usepackage{setspace}
\usepackage{xypic}
\usepackage{graphicx}
\usepackage[nottoc,notlot,notlof]{tocbibind}
\usepackage{grffile}
\usepackage{authblk}
\usepackage[top=2cm, bottom=2cm, left=2cm, right=2cm]{geometry}


\overfullrule=0mm

\input xy
\xyoption{all}

\newtheorem{prop}{Proposition}[section]
\newtheorem{lem}[prop]{Lemma}
\newtheorem{theorem}[prop]{Theorem}
\newtheorem*{theorem*}{Theorem}
\theoremstyle{definition}
\newtheorem{defu}[prop]{Definition}
\newtheorem*{defu*}{Definition}
\newtheorem{rem}[prop]{Remark}

\hyphenation{for-ma-tion}
\hyphenation{for-ma-tions}
\hyphenation{Hil-bert-ian}
\hyphenation{Hen-sel-ian}

\counterwithin*{equation}{section}

\begin{document}
\pagenumbering{arabic}
\setcounter{page}{1}
\title{The probability that two random points on the $n$-probability simplex are comparable with respect to the first order stochastic dominance and the monotone likelihood ratio partial orders}

\author{Sela Fried}
\date{} 
\maketitle

\begin{abstract}
First order stochastic dominance and monotone likelihood ratio are two partial orders on the $n$-probability simplex that play an important role in the establishment of structural results for MDPs and POMDPs. We study the strength of those partial orders in terms of how likely it is for two random points on the $n$-probability simplex to be comparable with respect to each of the two partial orders. 
\end{abstract} 

\section{Introduction}

First order stochastic dominance and monotone likelihood ratio are two partial orders on the $n$-probability simplex which are often used to obtain structural results in Markov decision precesses (MDPs) and partially observable MDPs (POMDPs). Such results may be exploited to plan efficient algorithms that find the optimal policy, a thing that otherwise might be computationally intractable. The reader is referred to \cite{Kri} for a thorough treatment of the two partial orders and many applications examples.
 
Despite their popularity, we could not find an answer to the natural question, which portion of the $n$-probability simplex is actually comparable with respect to each of the partial orders. In this paper we fill this gap by showing that the probability that two random points on the $n$-probability simplex are comparable with respect to the first order stochastic dominance and the monotone likelihood ratio partial orders is $\frac{2}{n+1}$ and $\frac{2}{(n+1)!}$, respectively.

Let us begin by recalling the definition of the $n$-probability simplex, the formula for its volume and by setting up some notations.

\section{Preliminaries}

Unless otherwise stated, $n$ is always a natural number and $u$ a positive real number.

\begin{defu}\label{def; simplex}
The set
$$
\Delta^{n,u} =   \{(x_0,\ldots,x_n)\in\mathbb{R}^{n+1}\; |\; 
  x_0 + \cdots + x_n = u,\;  x_i \geq 0, \;0\leq i \leq n\}
$$
 is called the \textbf{$n$-probability simplex (of size $u$)}. 
\end{defu}

\begin{lem}\label{lem 2}
The volume $\textnormal{Vol}(\Delta^{n,u})$ of $\Delta^{n,u}$ is $\frac{\sqrt{n+1}}{n!}u^n$.
\end{lem}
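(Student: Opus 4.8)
The plan is to treat $\Delta^{n,u}$ as an $n$-dimensional simplex sitting inside the hyperplane $H = \{x_0 + \cdots + x_n = u\}$ of $\mathbb{R}^{n+1}$, so that $\textnormal{Vol}$ denotes $n$-dimensional measure on $H$, and to reduce the computation to that of an ordinary simplex in $\mathbb{R}^n$ via an orthogonal projection. Concretely, let $\pi : \mathbb{R}^{n+1} \to \mathbb{R}^n$ be the map that forgets the coordinate $x_0$. On $H$ the relation $x_0 = u - (x_1 + \cdots + x_n)$ holds, so $\pi$ restricts to an affine bijection of $\Delta^{n,u}$ onto the corner simplex $T^{n,u} = \{(x_1,\ldots,x_n) : x_i \geq 0, \ \sum_{i=1}^n x_i \leq u\}$, whose $n$-volume is the elementary $u^n/n!$ (e.g.\ by the scaling $x_i \mapsto u x_i$ from the unit corner simplex, which has volume $1/n!$).

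Since forgetting $x_0$ is exactly orthogonal projection of $\mathbb{R}^{n+1}$ onto the coordinate hyperplane $\{x_0 = 0\} \cong \mathbb{R}^n$, restricting $\pi$ to $H$ scales $n$-volume by the constant factor $\cos\theta$, where $\theta$ is the dihedral angle between $H$ and $\{x_0 = 0\}$. This angle is read off from the unit normals: $H$ has unit normal $\nu = \tfrac{1}{\sqrt{n+1}}(1,\ldots,1)$ and $\{x_0=0\}$ has unit normal $e_0$, so $\cos\theta = |\langle \nu, e_0\rangle| = \tfrac{1}{\sqrt{n+1}}$. Hence $\textnormal{Vol}(T^{n,u}) = \cos\theta \cdot \textnormal{Vol}(\Delta^{n,u})$, and solving gives $\textnormal{Vol}(\Delta^{n,u}) = \sqrt{n+1}\,\textnormal{Vol}(T^{n,u}) = \frac{\sqrt{n+1}}{n!}u^n$, as claimed.

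The only points that need care are the direction of the scaling factor — the projected region is smaller, so one divides by $\cos\theta$ to recover $\textnormal{Vol}(\Delta^{n,u})$ — and the claim that $\pi|_H$ scales $n$-volume by exactly $\cos\theta$; the latter holds because on the $(n-1)$-dimensional common intersection $\{x_0=0,\ \sum_i x_i = 0\}$ the map $\pi$ is an isometry, while in the one remaining direction of $H$ it contracts by $\cos\theta$, so the Jacobian has singular values $1$ (with multiplicity $n-1$) and $\cos\theta$. As an independent cross-check I would recompute the answer from the Gram determinant of the edge vectors $w_i = u(e_i - e_0)$, $i = 1,\ldots,n$: here $\langle w_i, w_j\rangle = u^2(\delta_{ij} + 1)$, so the Gram matrix is $u^2(I_n + J_n)$ with $J_n$ the all-ones matrix; since $I_n + J_n$ has eigenvalues $n+1$ (once) and $1$ ($n-1$ times), its determinant is $n+1$, and the simplex volume $\frac{1}{n!}\sqrt{u^{2n}(n+1)} = \frac{\sqrt{n+1}}{n!}u^n$ agrees.
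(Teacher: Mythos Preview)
Your proof is correct and follows essentially the same approach as the paper: both compute the volume of the corner simplex $T^{n,u}=\Sigma^{n,u}$ as $u^n/n!$ and then relate it to $\Delta^{n,u}$ via the factor $\sqrt{n+1}$ coming from the projection off the hyperplane $\{\sum x_i = u\}$. The only difference is that the paper outsources both steps to references (\cite{Ell} and \cite{Bat}), whereas you supply the details yourself and add a Gram-determinant cross-check.
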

\begin{proof}
It follows from \cite{Ell} that the volume of the set $$
\Sigma^{n,u} =   \{(x_1,\ldots,x_n)\in\mathbb{R}^{n+1}\; |\; 
  x_1 + \cdots + x_n\leq u,\;  x_i \geq 0, \;0\leq i \leq n\}
$$ is $\frac{u^n}{n!}$. By \cite{Bat}, $\textnormal{Vol}(\Delta^{n,u})=\sqrt{n+1}\textnormal{Vol}(\Sigma^{n,u})$.
\end{proof}

\begin{defu}\label{def; par} 
Let $\preceq$ be a partial order on $\Delta^{n,u}$ and let $a\in\Delta^{n,u}$. 
\begin{enumerate}
\item [(1)] We denote $\Delta^{n,u}_{\succeq a} = \{x\in\Delta^{n,u}\;|x\succeq a\}$.
\item [(2)] Whenever we write $X\succeq a$ we tacitly mean that $X$ is a random variable that uniformly takes values in $\Delta^{n,u}$.
\end{enumerate}
\end{defu}

\section{First order stochastic dominance}

\begin{defu}\label{def; fosd}
Let $x=(x_0,\ldots,x_n), x'=(x'_0,\ldots,x'_n)\in \Delta^{n,u}$. We say that \textbf{$x'$ first order stochastically dominates $x$} and write $x\leq_s x'$ if $\sum_{i=k}^n x_i\leq\sum_{i=k}^n x'_i$ for each $0\leq k\leq n$.
\end{defu}

\begin{figure}[h]
\centering
\includegraphics[width=5cm, height=5cm]{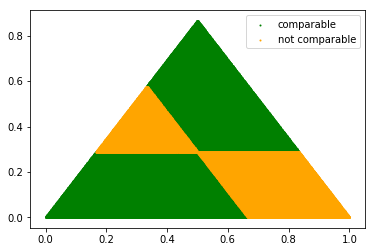}
\vspace{.3in}
\caption{The distribution of points comparable and not comparabale to $(\frac{1}{3}, \frac{1}{3}, \frac{1}{3})$ in $\Delta^{2}$.}

\label{fig; 1}
\end{figure}

\begin{rem}
For $n=1$ any two points on the $1$-simplex (a line) are first order stochastically comparable but already for $n=2$ there exist points that are not. Indeed, for $x=(\frac{1}{3}, \frac{1}{3}, \frac{1}{3})$ and $x'=(\frac{1}{2}, 0, \frac{1}{2})$ neither $x\leq_s x'$ nor $x'\leq_s x$ hold.
\end{rem}

The following lemma shows that given the last coordinate, first order stochastic dominance can be verified in one dimension less. Its proof is an easy exercise.

\begin{lem}\label{lem 1}
Let $a=(a_0,\ldots,a_{n+1}), x=(x_0,\ldots,x_{n+1})\in\Delta^{n+1,u}$. Let $1 \leq k \leq n+1$ such that $$a_k + \cdots + a_{n+1} \leq x_{n+1} < a_{k-1} + \cdots + a_{n+1}.$$ Then $a\leq_s x  \;(\text{in } \Delta^{n+1,u})$ if and only if
$$
(a_0, \ldots, a_{k-2}, a_{k-1} + \cdots + a_{n+1} - x_{n+1}, 0,\ldots,0)\leq_s  (x_0,\ldots,x_n)\;( \text{in }\Delta^{n,u-x_{n+1}}).
$$
\end{lem}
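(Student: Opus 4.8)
The plan is to translate both sides of the claimed equivalence into systems of tail-sum inequalities and to check that they define the same set of constraints. Throughout I would write $A_j = \sum_{i=j}^{n+1} a_i$ and $X_j = \sum_{i=j}^{n+1} x_i$ for the tails of $a$ and $x$. By Definition \ref{def; fosd}, the relation $a \leq_s x$ in $\Delta^{n+1,u}$ means $A_j \leq X_j$ for every $0 \leq j \leq n+1$; the case $j=0$ is automatic since $A_0 = X_0 = u$, so only the range $1 \leq j \leq n+1$ carries information.

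Next I would unwind the right-hand side. Denote the reduced vector by $b = (b_0,\ldots,b_n)$, so that $b_i = a_i$ for $0 \leq i \leq k-2$, $b_{k-1} = A_{k-1} - x_{n+1}$, and $b_i = 0$ for $k \leq i \leq n$. The hypothesis $x_{n+1} < A_{k-1}$ guarantees $b_{k-1} \geq 0$, and a short computation gives $\sum_{i=0}^n b_i = u - x_{n+1}$, so that $b \in \Delta^{n,\,u-x_{n+1}}$ as required. Writing $X'_j = \sum_{i=j}^n x_i = X_j - x_{n+1}$ for the tails of $(x_0,\ldots,x_n)$ and computing the tails $B_j = \sum_{i=j}^n b_i$ of $b$ via the telescoping identity $\sum_{i=j}^{k-2} a_i + A_{k-1} = A_j$, I expect to obtain $B_j = A_j - x_{n+1}$ for $1 \leq j \leq k-1$ and $B_j = 0$ for $k \leq j \leq n$. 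Hence $b \leq_s (x_0,\ldots,x_n)$ in $\Delta^{n,\,u-x_{n+1}}$ splits into the conditions $A_j \leq X_j$ for $1 \leq j \leq k-1$ together with the conditions $x_{n+1} \leq X_j$ for $k \leq j \leq n$.

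The crux is then to observe that the surviving inequalities on each side are either identical or automatically true. The constraints $A_j \leq X_j$ for $1 \leq j \leq k-1$ appear verbatim in the definition of $a \leq_s x$. For the range $k \leq j \leq n+1$ the inequalities are forced: since the $a_i$ are nonnegative, $A_j \leq A_k$, and the hypothesis gives $A_k \leq x_{n+1}$, while nonnegativity of the $x_i$ yields $x_{n+1} \leq X_j$ because the term $x_{n+1}$ sits inside the sum $X_j$. Thus $A_j \leq x_{n+1} \leq X_j$ holds automatically for $k \leq j \leq n+1$. This shows at once that the extra right-hand conditions $x_{n+1} \leq X_j$ ($k \leq j \leq n$) are automatically satisfied and that the left-hand conditions $A_j \leq X_j$ ($k \leq j \leq n+1$) are automatically satisfied, so neither imposes any restriction. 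Both sides therefore reduce to the single system $A_j \leq X_j$ for $1 \leq j \leq k-1$, and the equivalence follows.

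I expect the only real friction to be bookkeeping: keeping the index shift between $\Delta^{n+1,u}$ and $\Delta^{n,\,u-x_{n+1}}$ straight, handling the boundary values $k=1$ and $k=n+1$ (where one of the three blocks of coordinates of $b$ is empty), and applying the telescoping identity that produces $B_j = A_j - x_{n+1}$ correctly. None of these is conceptually hard; the single idea driving the proof is that the hypothesis $A_k \leq x_{n+1} < A_{k-1}$ pins down precisely the index $k$ at which the tail conditions cease to be informative, which is exactly why \emph{comparability} survives intact under the reduction.
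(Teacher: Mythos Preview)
Your proof is correct; the paper itself omits the argument entirely, labeling it ``an easy exercise,'' and your direct unwinding of the tail-sum conditions is exactly the natural verification the author presumably had in mind. The bookkeeping you flag (the edge cases $k=1$ and $k=n+1$, the index shift, and the telescoping identity $B_j = A_j - x_{n+1}$) is all handled correctly.
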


Lemma \ref{lem 3} is the key ingredient in the proof of the main result of this section (Theorem \ref{thm; 1}). It makes use of the following notations:

\begin{defu}\label{def; fosd}
Let $(a_0,\ldots,a_n)\in\Delta^{n,u}$, $x_0,\ldots,x_n$ indeterminates and $0\leq k \leq n$.
\begin{enumerate}
\item[(1)] Denote
$$
H_k(n) = \{x_{i_0}\cdots x_{i_{n-1}}\;| \;0\leq i_j\leq\min\{j,k\}, \;0\leq j \leq n-1\}.
$$
\item[(2)] For $h\in H_k(n)$ and $0\leq i\leq n$ we denote by $\text{d}_i(h)$ the degree of $x_i$ in $h$.
\item[(3)] Let $k_1, \ldots, k_m$ be non negative integers. Recall that the multinomial coefficient is defined as $$\binom{n}{k_1, \ldots, k_m} = \frac{n!}{k_1!\cdots k_m!}.$$ For $h\in H_k(n)$ we denote $$\mathcal{D}(h)=\binom{n}{\text{d}_0(h), \ldots, \text{d}_{n-1}(h)}.$$
\item [(4)] For $h\in H_k(n)$ and  we denote by $h\Big|_{(a_0,\ldots,a_n)}$ the assignment of $a_i$ in $x_i$ for each $0\leq i\leq n$. 
\item [(5)] Denote $\mathcal{S}(a,k)=\sum_{i=k}^n a_i$.
\end{enumerate}
\end{defu}

\begin{rem}
The set $H_k(n)$ consists of the distinct monomials (without their coefficients) appearing in the expansion of $\prod_{k=0}^{n-1}\sum_{i=0}^kx_i$ that contain no $x_i$ for $k\leq i \leq n$.
\end{rem}

\begin{lem}\label{lem; 10}
$|H_n(n)|=\frac{(2n)!}{n!(n+1)!}$.
\end{lem}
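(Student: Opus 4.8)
The plan is to identify each element of $H_n(n)$ with its exponent vector and then pin down exactly which exponent vectors occur; the surviving vectors turn out to be a classical Catalan object. Since for $k=n$ the constraint $i_j\le\min\{j,n\}$ reads $i_j\le j$, every monomial in $H_n(n)$ is a product of $n$ of the $n$ variables $x_0,\ldots,x_{n-1}$, so it is determined by its exponent vector $(d_0,\ldots,d_{n-1})\in\mathbb{Z}_{\ge0}^{n}$ with $\sum_{i=0}^{n-1}d_i=n$. Thus counting $H_n(n)$ amounts to counting those exponent vectors that are actually realizable as $x_{i_0}\cdots x_{i_{n-1}}$ with $i_j\le j$ for all $j$.

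First I would prove the realizability criterion: $x_0^{d_0}\cdots x_{n-1}^{d_{n-1}}\in H_n(n)$ if and only if $\sum_{i=0}^{m-1}d_i\ge m$ for every $1\le m\le n$. Necessity is immediate, since in any representation with $i_j\le j$ the $m$ positions $j\in\{0,\ldots,m-1\}$ all carry indices $\le m-1$, whence $\sum_{i=0}^{m-1}d_i=\#\{j:i_j\le m-1\}\ge m$. For sufficiency I would exhibit an explicit representation: list the required indices (each $i$ taken with multiplicity $d_i$) in nondecreasing order as $c_0\le c_1\le\cdots\le c_{n-1}$ and set $i_j=c_j$. The inequality $c_j\le j$ holds exactly when at least $j+1$ of the indices are $\le j$, that is $\sum_{i=0}^{j}d_i\ge j+1$, which is the hypothesis with $m=j+1$. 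Hence the sorted assignment is legal and realizes the monomial.

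It then remains to count the vectors $(d_0,\ldots,d_{n-1})$ of nonnegative integers with $\sum_{i=0}^{n-1}d_i=n$ and $\sum_{i=0}^{m-1}d_i\ge m$ for all $m$. I would set up the standard bijection with monotone lattice paths from $(0,0)$ to $(n,n)$ that stay weakly above the diagonal: reading a path as blocks of north-steps each terminated by an east-step, let $d_i$ be the number of north-steps preceding the $(i+1)$-th east-step. After the $m$-th east-step the path sits at height $\sum_{i=0}^{m-1}d_i$ and horizontal coordinate $m$, so the partial-sum condition is precisely the requirement $y\ge x$ along the path. Such paths are counted by the Catalan number $\frac{1}{n+1}\binom{2n}{n}=\frac{(2n)!}{n!\,(n+1)!}$, which is the asserted value.

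The genuinely substantive step is the realizability criterion; everything after it is bookkeeping. I expect the only subtlety there to be making sufficiency airtight, and the sorted-assignment trick does this cleanly, sidestepping an appeal to Hall's marriage theorem (whose nested neighborhoods would in any case collapse the Hall condition to exactly these threshold inequalities). The closing Catalan enumeration I would simply quote, optionally cross-checking the formula against the small cases $|H_1(1)|=1$, $|H_2(2)|=2$, and $|H_3(3)|=5$.
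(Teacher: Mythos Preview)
Your argument is correct. The paper's own proof is a single sentence: it simply cites Stanley's Catalan addendum (the interpretation labeled $y^6$) for the fact that $|H_n(n)|$ equals the $n$-th Catalan number, without spelling out any bijection or realizability criterion. You reach the same Catalan identification but in a self-contained way: you first isolate the exponent-vector realizability condition $\sum_{i=0}^{m-1} d_i \ge m$ (with the sorted-assignment argument for sufficiency, which is clean and correct), and then match those vectors to Dyck paths. What your route buys is an explicit, elementary proof that does not require the reader to locate the right item in Stanley's list; what the paper's route buys is brevity. The underlying idea---recognizing a standard Catalan structure---is the same in both.
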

\begin{proof}
It is well known that one of the interpretations of the Catalan numbers is the order of $H_n(n)$ (\cite[$y^6$ on p. 19]{3}).
\end{proof}

\begin{lem}\label{lem 3}
Let  $a=(a_0,\ldots,a_n)\in\Delta^{n,u}$. Then 
$$P[X\geq_s a]=\\ \frac{1}{u^n}\sum_{h\in H_n(n)}\mathcal{D}(h)h\Big|_{(a_0,\ldots,a_n)}.$$
\end{lem}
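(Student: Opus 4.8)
The plan is to prove the formula by induction on $n$, using Lemma~\ref{lem 1} to peel off one coordinate at a time; the base case $n=1$ is a one-line computation, since $H_1(1)=\{x_0\}$, $\mathcal{D}(x_0)=1$, and the event $X\geq_s a$ amounts to $X_1\geq a_1$, giving $P[X\geq_s a]=a_0/u$. For the inductive step I would take $a=(a_0,\ldots,a_{n+1})\in\Delta^{n+1,u}$ and condition on the last coordinate $X_{n+1}=t$. Writing $P[X\geq_s a]$ as a ratio of volumes, two standard facts organize the conditioning: given $X_{n+1}=t$ the vector $X'=(X_0,\ldots,X_n)$ is uniform on $\Delta^{n,u-t}$, and, by slicing $\Delta^{n+1,u}$ and Lemma~\ref{lem 2}, $X_{n+1}$ has density $\tfrac{(n+1)(u-t)^n}{u^{n+1}}$ on $[0,u]$. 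For $t$ in the range $\mathcal{S}(a,k)\leq t<\mathcal{S}(a,k-1)$, Lemma~\ref{lem 1} replaces the event $X\geq_s a$ by $X'\geq_s a'$ in $\Delta^{n,u-t}$, where $a'=(a_0,\ldots,a_{k-2},\mathcal{S}(a,k-1)-t,0,\ldots,0)$.

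Applying the induction hypothesis to $a'$ yields the conditional probability $\tfrac{1}{(u-t)^n}\sum_{h\in H_n(n)}\mathcal{D}(h)\,h\big|_{a'}$, whose prefactor $(u-t)^{-n}$ cancels against the density. Because $a'_i=0$ for $i\geq k$, only the monomials $h$ with $\text{d}_i(h)=0$ for all $i\geq k$ survive, and for these $h\big|_{a'}=\big(\prod_{i=0}^{k-2}a_i^{\text{d}_i(h)}\big)(\mathcal{S}(a,k-1)-t)^{\text{d}_{k-1}(h)}$. The only remaining integral is the elementary $\int_{\mathcal{S}(a,k)}^{\mathcal{S}(a,k-1)}(\mathcal{S}(a,k-1)-t)^{\text{d}_{k-1}(h)}\,dt=\tfrac{a_{k-1}^{\text{d}_{k-1}(h)+1}}{\text{d}_{k-1}(h)+1}$, using $\mathcal{S}(a,k-1)-\mathcal{S}(a,k)=a_{k-1}$. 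Summing over $k$ then gives $P[X\geq_s a]=\tfrac{1}{u^{n+1}}\sum_{k=1}^{n+1}\sum_{h}(n+1)\tfrac{\mathcal{D}(h)}{\text{d}_{k-1}(h)+1}\big(\prod_{i=0}^{k-2}a_i^{\text{d}_i(h)}\big)a_{k-1}^{\text{d}_{k-1}(h)+1}$, the inner sum taken over $h\in H_n(n)$ with $\text{d}_i(h)=0$ for $i\geq k$.

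It then remains to recognize this expression as $\tfrac{1}{u^{n+1}}\sum_{g\in H_{n+1}(n+1)}\mathcal{D}(g)\,g\big|_a$, which I would establish by a degree-sequence bijection. To a monomial $g\in H_{n+1}(n+1)$ whose top occurring variable is $x_q$, associate the pair $(h,k)$ with $k=q+1$ and $h$ obtained from $g$ by lowering the exponent of $x_q$ by one, i.e.\ $\text{d}_i(h)=\text{d}_i(g)$ for $i<q$, $\text{d}_q(h)=\text{d}_q(g)-1$, and $\text{d}_i(h)=0$ for $i>q$. Describing $H_{n+1}(n+1)$ and $H_n(n)$ through their degree sequences --- those $(\text{d}_0,\ldots)$ summing to $n+1$, respectively $n$, and satisfying the ballot inequalities $\sum_{i<m}\text{d}_i\geq m$ --- one checks that this map is a bijection onto the set of pairs $(h,k)$ occurring above, and that the coefficients agree: a direct multinomial manipulation gives $\mathcal{D}(g)=(n+1)\,\mathcal{D}(h)/(\text{d}_{k-1}(h)+1)$, matching the summands term by term and closing the induction.

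The conditioning, the density bookkeeping, and the single power integral are routine. The crux is the final paragraph: lining up the index $k$ from Lemma~\ref{lem 1} with exactly the variable whose exponent is raised in $h\big|_{a'}$, and seeing that the apparently one-sided reduction --- which ever only increases a single exponent by one --- is inverted precisely by the Catalan/ballot description of $H_{n+1}(n+1)$. The main thing to verify carefully is that the correspondence $g\leftrightarrow(h,k)$ is genuinely a bijection, so that every monomial of $H_{n+1}(n+1)$ is produced exactly once; once that is in hand the coefficient identity is a short computation.
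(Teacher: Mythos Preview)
Your proposal is correct and follows essentially the same route as the paper's own proof: induction on $n$ with the base case $P[X\geq_s a]=a_0/u$, conditioning on the last coordinate and invoking Lemma~\ref{lem 1} on each interval $[\mathcal{S}(a,k),\mathcal{S}(a,k-1)]$, applying the induction hypothesis so that the $(u-t)^n$ factors cancel, integrating the single surviving power, and then matching the result to $\sum_{g\in H_{n+1}(n+1)}\mathcal{D}(g)\,g$ via the bijection that removes one copy of the top occurring variable together with the multinomial identity $\mathcal{D}(g)=(n+1)\,\mathcal{D}(h)/(\text{d}_{k-1}(h)+1)$. The only cosmetic differences are your explicit density formulation and your description of $H_{n+1}(n+1)$ through the ballot inequalities on degree sequences rather than through the index sequences $(i_0,\ldots,i_n)$ used in the paper; these are equivalent and the bijection is the same.
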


\begin{proof}
We proceed by induction on $n$. Let $n=1, u>0$ and $a=(a_0,a_1)\in\Delta^{1,u}$. Then 
$$P[X\geq_s a] =  \text{Vol}(\Delta^{1,u})^{-1}\int_{\Delta^{1,u}_{\geq_s a}}dx = \frac{1}{\sqrt{2}u}\sqrt{2}\int_{a_1}^u dx_1 = \frac{1}{u}(u - a_1) 
= \frac{a_0}{u}.
$$ Suppose the claim holds for $n$ and let $a=(a_0,\ldots,a_{n+1})\in\Delta^{n+1,u}$. Then $$ P[X\geq_s a] =\text{Vol}(\Delta^{n+1,u})^{-1}\int_{\Delta^{n+1,u}_{\geq_s a}}dx =$$ $$ 
\frac{(n+1)!}{\sqrt{n+2}}\frac{1}{u^{n+1}}\sqrt{n+2}\Big(\int_{\mathcal{S}(a,n+1)}^{\mathcal{S}(a,n)}\frac{\text{Vol}(\Delta^{n,u-x_{n+1}})}{\sqrt{n+1}}  P[X\geq_s (a_0,\ldots,a_{n-1},\mathcal{S}(a,n)-x_{n+1})]dx_{n+1}  +$$ $$ \int_{\mathcal{S}(a,n)}^{\mathcal{S}(a,n-1)}\frac{\text{Vol}(\Delta^{n,u-x_{n+1}})}{\sqrt{n+1}}  P[X\geq_s (a_0,\ldots,a_{n-2}, \mathcal{S}(a,n-1)-x_{n+1},0)]dx_{n+1}  + \cdots +$$ $$  \int_{\mathcal{S}(a,1)}^{u}\frac{\text{Vol}(\Delta^{n,u-x_{n+1}})}{\sqrt{n+1}} P[X\geq_s (u-x_{n+1}, 0, \ldots,0)]dx_{n+1}\Big) =$$ $$ \frac{n+1}{u^{n+1}}\Big( \int_{\mathcal{S}(a,n+1)}^{\mathcal{S}(a,n)}\sum_{h\in H_n(n)}\mathcal{D}(h)h \Big|_{(a_0,\ldots,a_{n-1},\mathcal{S}(a,n)-x_{n+1})}dx_{n+1}+$$ $$ \int_{\mathcal{S}(a,n)}^{\mathcal{S}(a,n-1)}\sum_{h\in H_n(n)}\mathcal{D}(h)h \Big|_{(a_0,\ldots,a_{n-2}, \mathcal{S}(a,n-1)-x_{n+1},0)}dx_{n+1} + \cdots + $$ $$ \int_{\mathcal{S}(a,1)}^{u}\sum_{h\in H_n(n)}\mathcal{D}(h)h \Big|_{ (u-x_{n+1}, 0, \ldots,0)}dx_{n+1}\Big) =$$ $$  \frac{n+1}{u^{n+1}}\Big( \sum_{h\in H_n(n)}\mathcal{D}(h)h\frac{x_n}{\text{d}_n(h)+1}  \Big|_{(a_0,\ldots,a_n)}+\sum_{h\in H_n(n)}\mathcal{D}(h)h\frac{x_{n-1}}{\text{d}_{n-1}(h)+1}  \Big|_{(a_0,\ldots,a_{n-1},0)} +  \cdots +$$ $$ \sum_{h\in H_n(n)}\mathcal{D}(h)h\frac{x_0}{\text{d}_0(h)+1}  \Big|_{(a_0,0\ldots,0)}\Big) = \frac{n+1}{u^{n+1}}\Big( \sum_{h\in H_{n}(n)}\mathcal{D}(h)h\frac{x_n}{\text{d}_n(h)+1} +$$ $$  \sum_{h\in H_{n-1}(n)}\mathcal{D}(h)h\frac{x_{n-1}}{\text{d}_{n-1}(h)+1} + \cdots +$$ $$ \sum_{h\in H_0(n)}\mathcal{D}(h)h\frac{x_0}{\text{d}_0(h)+1}\Big)  \Big|_{(a_0,\ldots,a_n)}.$$

Thus, we need to show that $$(n+1)\Big( \sum_{h\in H_{n}(n)}\mathcal{D}(h)h\frac{x_n}{\text{d}_n(h)+1}+  \sum_{h\in H_{n-1}(n)}\mathcal{D}(h)h\frac{x_{n-1}}{\text{d}_{n-1}(h)+1} + \cdots + \\ \sum_{h\in H_0(n)}\mathcal{D}(h)h\frac{x_0}{\text{d}_0(h)+1}\Big) =$$ $$ \sum_{h'\in H_{n+1}(n+1)}\mathcal{D}(h')h'.$$

To this end let $h\in H_k(n)$ for some $0\leq k\leq n$. Thus, $h=x_{i_0}\cdots x_{i_{n-1}}$ for some $$0\leq i_j\leq\min\{j,k\}, 0\leq j \leq n-1.$$  It follows that  $$h' = hx_k = x_{i_0}\cdots x_{i_{n-1}}x_k\in H_k(n+1)\subseteq H_{n+1}(n+1).$$ Furthermore, for $0\leq i\leq n$:
\begin{equation*}
\text{d}_i(h')=
\begin{cases}
\text{d}_i(h), & i \neq k  \\
\text{d}_i(h)+1, & i = k
\end{cases}
\end{equation*}
Conclude that $$\label{m;1} (n+1)\mathcal{D}(h)h\frac{x_k}{\text{d}_k(h)+1}  = $$ $$ (n+1)\binom{n}{\text{d}_0(h), \ldots, \text{d}_{n-1}(h)}h\frac{x_k}{\text{d}_k(h)+1} =  $$ $$ \binom{n+1}{\text{d}_0(h'), \ldots, \text{d}_n(h')}h' = \mathcal{D}(h')h'.$$

Converesely, let $h'\in H_{n+1}(n+1)$. Then there are $0\leq i_j'\leq j, 0\leq j \leq n$ such that $h'=x_{i'_0}\cdots x_{i'_n}$. Let $0\leq k \leq n $ be maximal such that $\text{d}_k(h')>0$ and let $0\leq l\leq n$ be maximal with $i'_l = k$. Define $h=x_{i_0} \cdots x_{i_{n-1}}$ where 
\begin{equation*}
i_j=
\begin{cases}
i'_j, & 0\leq j\leq l-1 \\
i'_{j+1}, & l\leq j \leq n-1
\end{cases}
\end{equation*}

Consider $0\leq j\leq l-1$. Then $i_j = i'_j \leq j$. Now let $l \leq j\leq n-1$. We have $i_j = i'_{j+1}<k=i'_l \leq l \leq j$. By definition of $k$, $i_j\leq k$ for each $0\leq j\leq n-1$. Thus, $h\in H_k(n)$ and we conclude that (\ref{m;1}) holds.
\end{proof}

\begin{rem}
It is interesting to notice that while the monomials appearing in the expression of the probability in the previous lemma stem from the expansion of $\prod_{k=0}^{n-1}\sum_{i=0}^kx_i$, their coefficients are the corresponding coeffients in the expansion of $(\sum_{i=0}^{n-1}x_i)^n$.
\end{rem}

\begin{theorem}\label{thm; 1}
Let $X_1, X_2$ be two random variables that uniformly take values in $\Delta^{n,u}$. Then $$P[X_1 \textnormal{ and } X_2 \textnormal{ are first order } \\ \textnormal{stochastically comparable}]=\frac{2}{n+1}.$$
\end{theorem}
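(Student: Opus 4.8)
The plan is to reduce the comparability probability to a one-sided dominance probability and then integrate the formula of Lemma~\ref{lem 3} against the uniform measure on $\Delta^{n,u}$. Since $\leq_s$ is a partial order and hence antisymmetric, the events $\{X_1\geq_s X_2\}$ and $\{X_2\geq_s X_1\}$ overlap only on $\{X_1=X_2\}$, which has measure zero because $X_1,X_2$ are independent and continuously distributed. As $X_1$ and $X_2$ are identically distributed we have $P[X_1\geq_s X_2]=P[X_2\geq_s X_1]$, so
$$P[X_1\textnormal{ and }X_2\textnormal{ are comparable}]=2P[X_1\geq_s X_2].$$
It therefore suffices to show $P[X_1\geq_s X_2]=\frac{1}{n+1}$.

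Conditioning on the value $a$ of $X_2$ and invoking Lemma~\ref{lem 3},
$$P[X_1\geq_s X_2]=\frac{1}{\textnormal{Vol}(\Delta^{n,u})}\int_{\Delta^{n,u}}P[X_1\geq_s a]\,dS(a)=\frac{1}{\textnormal{Vol}(\Delta^{n,u})\,u^n}\sum_{h\in H_n(n)}\mathcal{D}(h)\int_{\Delta^{n,u}}h\Big|_{a}\,dS(a),$$
where $dS$ is the surface measure. Each $h\in H_n(n)$ is a monomial of total degree $n$ in $x_0,\ldots,x_{n-1}$, so I would evaluate $\int_{\Delta^{n,u}}h|_{a}\,dS(a)$ by the standard Dirichlet integral, parametrizing the simplex by $x_0,\ldots,x_{n-1}$ (which contributes the factor $\sqrt{n+1}$ already appearing in Lemma~\ref{lem 2}). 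This yields
$$\int_{\Delta^{n,u}}h\Big|_{a}\,dS(a)=\sqrt{n+1}\,u^{2n}\,\frac{\prod_{i=0}^{n-1}\text{d}_i(h)!}{(2n)!}.$$

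The key step -- and the place where the design of $H_n(n)$ together with the coefficients $\mathcal{D}(h)$ pays off -- is the observation that multiplying by $\mathcal{D}(h)=\frac{n!}{\prod_{i=0}^{n-1}\text{d}_i(h)!}$ cancels the product of factorials exactly, so that
$$\mathcal{D}(h)\int_{\Delta^{n,u}}h\Big|_{a}\,dS(a)=\sqrt{n+1}\,u^{2n}\,\frac{n!}{(2n)!}$$
is independent of $h$. Hence the sum collapses to $|H_n(n)|$ times this constant, and Lemma~\ref{lem; 10} supplies $|H_n(n)|=\frac{(2n)!}{n!(n+1)!}$. Substituting $\textnormal{Vol}(\Delta^{n,u})=\frac{\sqrt{n+1}}{n!}u^n$ from Lemma~\ref{lem 2} and simplifying then gives $P[X_1\geq_s X_2]=\frac{1}{n+1}$, whence the claim. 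I expect the main obstacle to be precisely the verification of the factorial cancellation and of the Catalan-number count $|H_n(n)|$; once these are in place, the remaining algebra is routine.
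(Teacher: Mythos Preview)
Your proposal is correct and follows essentially the same route as the paper: reduce to $2P[X_1\geq_s X_2]$ via antisymmetry and symmetry, integrate the formula of Lemma~\ref{lem 3} over $\Delta^{n,u}$, observe that the multinomial coefficient $\mathcal{D}(h)$ exactly cancels the factorial factor coming from the monomial integral, and finish with the Catalan-number count of Lemma~\ref{lem; 10}. The only cosmetic difference is that you invoke the Dirichlet integral directly, whereas the paper cites Lasserre's integration formula~\cite{30}; these yield the same expression and the subsequent algebra is identical.
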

\begin{proof}
It holds $$P[X_1 \textnormal{ and } X_2 \textnormal{ are first order stochastically comparable}]=$$ $$ P[X_1\leq_s X_2 \text{ or } X_2 \leq_s X_1] = $$ $$ P[X_1 \leq_s X_2] + P[X_2 \leq_s X_1]  - P[X_1 \leq_s X_2 \text { and } X_2 \leq_s X_1] =$$ $$ 2P[X_1 \leq_s X_2] - P[X_1 = X_2] = 2P[X_1 \leq_s X_2] = $$ $$ 2\int_{\Delta^{n,u}}\frac{1}{u^n}\sum_{h\in H_n(n)}\mathcal{D}(h)h \Big|_{(a_0,\ldots,a_n)}\frac{n!}{\sqrt{n+1}}u^{-n}\text{d}a = $$ $$ \frac{2n!}{u^{2n}\sqrt{n+1}}\sum_{h\in H_n(n)}\mathcal{D}(h)\int_{\Delta^{n,u}}h \Big|_{(a_0,\ldots,a_n)} \text{d}a \underset{\cite{30}}{\overset{\textnormal{\normalsize Theorem 1.1 in}}{=}} $$ $$\frac{2n!}{u^n}\sum_{h\in H_n(n)}\mathcal{D}(h)\frac{\text{d}_0(h)!\cdots \text{d}_{n-1}(h)!}{n!}h \Big|_{\sqrt[n]{\frac{n!}{(2n)!}}u(1,\ldots,1)} =\frac{2(n!)^2}{(2n)!}|H_n(n)|=\frac{2}{n+1}.$$
\end{proof}

\section{Monotone likelihood ratio}

\begin{defu}
Let $x=(x_0,\ldots,x_n),x=(x'_0, \ldots, x'_n)\in\Delta^{n,u}$. We say that \textbf{$x'$ dominates $x$ with respect to the monotone likelihood order (MLR)} and write $x\leq_r x'$ if $$x'_ix_j\leq x_ix'_j, \;\;0\leq i<j\leq n.$$ 
\end{defu}

It is well known that MLR dominance implies first order stochastic dominance. Our proof of this result, which we give for completness, is a discrete version of \cite[Theorem 4.1.3]{Kri}:

\begin{lem}
Let $x=(x_0,\ldots,x_n),x=(x'_0, \ldots, x'_n)\in\Delta^{n,u}$ such that $x\leq_r x'$. Then $x\leq_s x'$.
\end{lem}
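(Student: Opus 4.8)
The plan is to translate the MLR hypothesis into a single-sign-change statement about the coordinatewise difference $d_i = x'_i - x_i$, and then read off the tail-sum inequalities that define $\leq_s$. First I would record two reductions. Since $x, x' \in \Delta^{n,u}$ both have coordinate sum $u$, we have $\sum_{i=0}^n d_i = 0$. Moreover, the desired conclusion $x \leq_s x'$ is, by Definition \ref{def; fosd}, exactly the assertion that $\mathcal{S}(x,k) \leq \mathcal{S}(x',k)$, i.e.\ $\sum_{i=k}^n d_i \geq 0$, for every $0 \leq k \leq n$.

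The crux of the argument — and the step I expect to be the only real obstacle — is to show that the sequence $(d_i)$ changes sign at most once, passing from non-positive to non-negative values. I would prove this by contradiction: suppose there are indices $i < j$ with $d_i > 0$ and $d_j < 0$. Then $x'_i > x_i \geq 0$ and $x_j > x'_j \geq 0$, so in particular $x'_i > 0$ and $x_j > 0$. Multiplying $x'_j \leq x_j$ by $x_i \geq 0$, and $x_i < x'_i$ by $x_j > 0$, yields the chain $x_i x'_j \leq x_i x_j < x'_i x_j$, hence $x'_i x_j > x_i x'_j$. This contradicts the MLR inequality $x'_i x_j \leq x_i x'_j$. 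The advantage of phrasing this multiplicatively, rather than through the ratios $x'_i/x_i$, is that the possible vanishing of coordinates needs no separate treatment.

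It then remains to deduce the tail-sum inequalities from the sign-change property. If no $d_i$ is positive, then since $\sum_{i=0}^n d_i = 0$ all the $d_i$ vanish, so $x = x'$ and there is nothing to prove. Otherwise set $m = \min\{i : d_i > 0\}$; by minimality $d_i \leq 0$ for $i < m$, and by the sign-change property $d_i \geq 0$ for $i \geq m$. For $k > m$ the tail $\sum_{i=k}^n d_i$ is a sum of non-negative terms, hence $\geq 0$. For $k \leq m$ I would instead write $\sum_{i=k}^n d_i = -\sum_{i=0}^{k-1} d_i$, using $\sum_{i=0}^n d_i = 0$; since every $d_i$ with $i < m$ is non-positive, this quantity is again $\geq 0$. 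In either case $\mathcal{S}(x,k) \leq \mathcal{S}(x',k)$ for all $k$, which is precisely the definition of $x \leq_s x'$.
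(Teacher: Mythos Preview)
Your proof is correct and follows essentially the same route as the paper: both arguments identify a threshold index (your $m=\min\{i:d_i>0\}$, the paper's $t=\min\{i:x_i\le x'_i\}$), use the MLR inequality to force a single sign change in the differences $x'_i-x_i$, and then read off the tail-sum inequalities directly on one side of the threshold and via complementation on the other. Your version is a bit more careful about edge cases (the possibility that all $d_i\le 0$, and the use of products rather than ratios so that vanishing coordinates need no separate treatment), but the underlying idea is the same.
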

\begin{proof}
Let $t=\min\{0\leq i\leq n\;|\;x_{i}\leq x'_{i}\}$ and let $t< j\leq n$. If $x'_j<x_j$ then $x_tx'_{j}< x'_tx_{j}$, contrary to $x\leq_r x'$. Thus, $x_j\leq x'_j$ and therefore $\sum_{i=j}^{n}x_{i}\leq\sum_{i=j}^{n}x'_{i}$. Now, for every $0\leq j<t$ it holds $x'_j<x_j$ and therefore $\sum_{i=0}^jx'_{i}<\sum_{i=0}^jx_{i}$. Thus, $u-\sum_{i=0}^jx'_i>u-\sum_{i=0}^jx_i$ and since $u=\sum_{i=0}^n x_i=\sum_{i=0}^n x'_i$, we have $\sum_{i=j+1}^{n}x_i<\sum_{i=j+1}^{n}x'_i.$
\end{proof}

\begin{figure}[h]
\centering
\includegraphics[width=5cm, height=5cm]{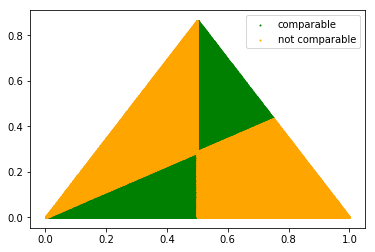}
\vspace{.3in}
\caption{The distribution of points comparable and not comparabale to $(\frac{1}{3}, \frac{1}{3}, \frac{1}{3})$ in $\Delta^{2}$ with respect to the MLR partial order.}

\label{fig; 2}
\end{figure}

In analogy to Lemma \ref{lem 1} we have the following easy lemma:

\begin{lem}\label{lem 10}
Let $a=(a_0,\ldots,a_{n+1}), x=(x_0,\ldots,x_{n+1})\in\Delta^{n+1,u}$.  Suppose $x_{n+1}<u$. Then $a\leq_r x  \;(\text{in } \Delta^{n+1,u})$ if and only if $x_na_{n+1}\leq a_nx_{n+1}$ and
$$(\frac{a_0}{v}, \ldots, \frac{a_n}{v})\leq_r  (x_0,\ldots,x_n)\;( \text{in }\Delta^{n,u-x_{n+1}}) \text{ where } v = \frac{a_0+\cdots +a_n}{u-x_{n+1}}.$$
\end{lem}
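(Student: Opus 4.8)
The plan is to unwind the definition of $\le_r$ on $\Delta^{n+1,u}$ into three groups of inequalities and match them against the two conditions on the right-hand side. Spelling out $a\le_r x$ means $x_ia_j\le a_ix_j$ for all $0\le i<j\le n+1$, and I would split these into: (A) the ``lower block'' $x_ia_j\le a_ix_j$ for $0\le i<j\le n$; (B) the single bridging inequality $x_na_{n+1}\le a_nx_{n+1}$, i.e.\ the case $(i,j)=(n,n+1)$; and (C) the ``far'' inequalities $x_ia_{n+1}\le a_ix_{n+1}$ for $0\le i\le n-1$.

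First I would dispose of the normalization by $v$. Since $u-x_{n+1}>0$ by hypothesis and (generically) $a_0+\cdots+a_n>0$, the scalar $v$ is well defined and positive; multiplying the defining inequalities of $(\frac{a_0}{v},\ldots,\frac{a_n}{v})\le_r(x_0,\ldots,x_n)$ through by $v$ shows that this normalized MLR relation is literally equivalent to group (A). Thus the right-hand side of the lemma is exactly ``(A) and (B)'', and the task reduces to comparing ``(A) and (B)'' with the full list ``(A), (B) and (C)''.

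The forward implication is then immediate: if $a\le_r x$ all three groups hold, so in particular (A) and (B) do, and (A) translates back to the normalized relation. The real content is the reverse implication, namely that (A) together with (B) forces (C). Here I would run a multiplicative transitivity argument, chaining each index $i<n$ to $n+1$ through the coordinate $n$: from the $(i,n)$ inequality of (A), namely $x_ia_n\le a_ix_n$, and the bridge (B), $x_na_{n+1}\le a_nx_{n+1}$, one multiplies the first by $x_{n+1}$ and the second by $x_i$ (or, dually, the first by $a_{n+1}$ and the second by $a_i$) and concatenates to obtain $x_n(x_ia_{n+1}-a_ix_{n+1})\le 0$ (respectively $a_n(x_ia_{n+1}-a_ix_{n+1})\le 0$); cancelling the positive factor yields the desired $(i,n+1)$ inequality.

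The main obstacle is precisely this cancellation, which needs $x_n>0$ or $a_n>0$. When $x_n=a_n=0$ the bridge (B) degenerates to $0\le 0$ and carries no information, and the stated equivalence can genuinely fail there: in $\Delta^{2,u}$ with $a=(\tfrac{u}{2},0,\tfrac{u}{2})$ and $x=(u,0,0)$ the right-hand conditions hold yet $a\le_r x$ does not. Since this exceptional locus has measure zero in $\Delta^{n+1,u}\times\Delta^{n+1,u}$ it is harmless for the volume computations that invoke the lemma, so I would either restrict to the open dense set on which the relevant coordinates are positive or state this genericity hypothesis explicitly before performing the transitivity step.
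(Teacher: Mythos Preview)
The paper offers no proof of this lemma, simply introducing it as ``the following easy lemma'', so there is nothing to compare your argument against at the level of strategy. Your approach---splitting the $\le_r$ inequalities in $\Delta^{n+1,u}$ into the lower block (A), the bridge (B), and the far inequalities (C), observing that the normalized MLR relation on the truncations is equivalent to (A) by scale-invariance, and then deriving (C) from (A) and (B) by multiplicative chaining---is correct and is the natural way to verify the statement.

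You are also more careful than the paper. Your counterexample $a=(\tfrac{u}{2},0,\tfrac{u}{2})$, $x=(u,0,0)$ in $\Delta^{2,u}$ is valid: the right-hand conditions hold (both reduce to $0\le 0$), yet the $(0,2)$ inequality $x_0a_2\le a_0x_2$ reads $\tfrac{u^2}{2}\le 0$ and fails. So the equivalence as stated is literally false on the locus $a_n=x_n=0$. Your diagnosis that this is a measure-zero set, and hence irrelevant for the volume/probability computations that invoke the lemma in the subsequent induction, is exactly right; the paper silently works under this genericity assumption without flagging it. Making the hypothesis $a_n>0$ (which holds on the interior of the simplex where all the later integrals live) explicit, as you suggest, is the clean fix.
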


\begin{rem}
In contrast to the case in the first section where we could directly calculate $P[X\geq_s a]$, here the induction step suggested in the previous lemma forces us to consider the probability of a more complex set, namely $P[X\geq_r, x_n\leq b]$ for arbitrary $b$. It turns out that in order to be able to calculate this probability it is necessary to know the exact location of $b$ in a certain partition of the interval $[0,u]$.

The condition $x_na_{n+1}\leq a_nx_{n+1}$ in the previous lemma induces an upper bound on $x_n$, namely $x_{n}\leq\frac{a_{n}}{a_{n+1}}x_{n+1}.$ We shall now show how the location of $\frac{a_{n}}{a_{n+1}}x_{n+1}$ in the interval $[0, u-x_{n+1}]$ depends on the location of $x_{n+1}$ in the interval $[0,u]$. It is easy to see that for $1\leq m\leq n$  it holds 

$$\frac{a_{n+1}}{\sum_{i=m-1}^{n+1}a_{i}}u\leq x_{n+1}\leq\frac{a_{n+1}}{\sum_{i=m}^{n+1}a_{i}}u\iff$$

$$\frac{a_{n}}{\sum_{i=m-1}^{n}a_{i}}(u-x_{n+1})\leq\frac{a_{n}}{a_{n+1}}x_{n+1}\leq\frac{a_{n}}{\sum_{i=m}^{n}a_{i}}(u-x_{n+1}).$$

If $\frac{a_{n+1}}{a_{n}+a_{n+1}}u\leq x_{n+1}$ the situation is somewhat different since $$\frac{a_{n+1}}{a_{n}+a_{n+1}}u\leq x_{n+1}\iff u-x_{n+1}\leq\frac{a_{n}}{a_{n+1}}x_{n+1}.$$ Thus, the condition $x_{n}\leq\frac{a_{n}}{a_{n+1}}x_{n+1}$ is automatically satisfied because $(x_1,\ldots,x_n)\in\Delta^{n,u-x_{n+1}}$ and therefore $x_n\leq u-x_{n+1}$.
\end{rem}

The following combinatorical identity is crucial for the proof of the following lemma:

\begin{lem}\label{lem; comb}
Let $a_0,\ldots,a_n$ be positive real numbers. Then
$$\sum_{k=0}^{n}(-1)^{k}\frac{(\sum_{i=0}^{n-k}a_{i})^{n}}{\prod_{i=0}^{n-k}(\sum_{j=i}^{n-k}a_j)\prod_{i=0}^{k-1}(\sum_{j=n+1-k}^{n-i}a_j)}=0.$$ 
\end{lem}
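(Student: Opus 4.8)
The plan is to reinterpret the alternating sum, after passing to partial sums, as an $(n+1)$-st order divided difference of the monomial $x^n$, which vanishes because $x^n$ has degree $n<n+1$.

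First I would set $Q_\ell=\sum_{i=0}^{\ell-1}a_i$ for $0\le \ell\le n+1$, so that $Q_0=0$, $Q_{n+1}=\sum_{i=0}^{n}a_i$, and, since the $a_i$ are positive, the nodes $0=Q_0<Q_1<\cdots<Q_{n+1}$ are pairwise distinct. Substituting $\ell=n+1-k$, the numerator $(\sum_{i=0}^{n-k}a_i)^n$ becomes $Q_\ell^n$; the first product $\prod_{i=0}^{n-k}(\sum_{j=i}^{n-k}a_j)$ becomes $\prod_{i=0}^{\ell-1}(Q_\ell-Q_i)$, i.e. the factors indexed below $\ell$; and the second product $\prod_{i=0}^{k-1}(\sum_{j=n+1-k}^{n-i}a_j)$ becomes $\prod_{j=\ell+1}^{n+1}(Q_j-Q_\ell)$, i.e. the factors indexed above $\ell$.

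The decisive bookkeeping step is the sign. The second product has $n+1-\ell$ factors, so $\prod_{j=\ell+1}^{n+1}(Q_j-Q_\ell)=(-1)^{n+1-\ell}\prod_{j=\ell+1}^{n+1}(Q_\ell-Q_j)$, and since $(-1)^k=(-1)^{n+1-\ell}$ this sign cancels exactly. Hence the $k$-th summand equals $Q_\ell^n/\prod_{i\ne \ell}(Q_\ell-Q_i)$, where $i$ ranges over $\{0,1,\ldots,n+1\}\setminus\{\ell\}$. As $k$ runs from $0$ to $n$, the index $\ell$ runs over $1,\ldots,n+1$, and I would adjoin the missing $\ell=0$ term at no cost, since $Q_0=0$ forces $Q_0^n=0$ for $n\ge 1$.

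It then remains to recognize $\sum_{\ell=0}^{n+1}Q_\ell^n/\prod_{i\ne \ell}(Q_\ell-Q_i)$ as the divided difference $[Q_0,\ldots,Q_{n+1}](x^n)$ over the $n+2$ distinct nodes, and to invoke the standard fact (an immediate consequence of Lagrange interpolation) that for distinct $t_0,\ldots,t_m$ and any polynomial $p$ with $\deg p\le m-1$ one has $\sum_{\ell=0}^{m}p(t_\ell)/\prod_{i\ne \ell}(t_\ell-t_i)=0$. Here $m=n+1$ and $\deg(x^n)=n<n+1$, so the sum vanishes. I expect the only genuine obstacle to be the careful re-indexing and sign tracking of the previous paragraph; once the summand is brought into the symmetric form $Q_\ell^n/\prod_{i\ne \ell}(Q_\ell-Q_i)$, the conclusion is immediate.
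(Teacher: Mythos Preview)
Your proof is correct and follows essentially the same route as the paper: both reduce the identity to the Lagrange/divided-difference fact that $\sum_{\ell}p(t_\ell)/\prod_{i\ne\ell}(t_\ell-t_i)=0$ for distinct nodes $t_0,\ldots,t_m$ and $\deg p<m$, applied to $p(x)=x^n$ with the nodes given by the partial sums of the $a_i$. The paper invokes this via Knuth's formulation and simply sets $x_k=\sum_{i=0}^{n-k}a_i$, leaving the remaining bookkeeping to the reader; your write-up makes explicit the one point the paper suppresses, namely that the extra node $Q_0=0$ must be adjoined (at no cost, since $0^n=0$ for $n\ge1$) so that there are $n+2$ nodes and the degree bound $n<n+1$ applies.
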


\begin{proof}
By \cite[Exercise 33 in Section 1.2.3]{40}, if $x_1,\ldots,x_n$ are distinct numbers, then 

$$
\sum_{j=1}^{n}\frac{x_{j}^{r}}{\prod_{\begin{subarray}{c}
1\leq k\leq n\\
k\neq j
\end{subarray}}(x_{j}-x_{k})}=
\begin{cases}
0, &0\leq r<n-1\\
1, &r=n-1\\
\sum_{i=1}^nx_i, &r=n
\end{cases}
$$
and our identity follows easily if we set $x_k = \sum_{i=0}^{n-k}a_i$.
\end{proof}

\begin{lem}
Let $a=(a_0,\ldots,a_n)\in\Delta^{n,u}$ and $a_n\leq b\leq u$. Let $1\leq m\leq n$ such that  $$\frac{a_n}{\sum_{m-1}^n a_i}u\leq b\leq\frac{a_n}{\sum_m^n a_i}u. $$ Then 
$$P[X\geq_r a, x_n\leq b]= \prod_{i=0}^n\frac{a_i}{\sum_{j=i}^n a_j} - \frac{1}{u^n}\sum_{k=0}^{n-m}(-1)^k(u-\frac{\sum_{i=n-k}^na_i}{a_n}b)^n \frac{\prod_{i=0}^na_i}{\prod_{i=0}^{n-k-1}(\sum_{j=i}^{n-k-1}a_j)\prod_{i=0}^k(\sum_{j=n-k}^{n-i}a_j)}.$$
\end{lem}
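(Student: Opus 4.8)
The plan is to argue by induction on $n$, peeling off the last coordinate by means of Lemma \ref{lem 10}. The base case $n=1$ is a direct computation: there $X\geq_r a$ is equivalent to $x_1\geq a_1$, so $P[X\geq_r a,\,x_1\leq b]=(b-a_1)/u$, and one checks this matches the claimed expression, whose sum then reduces to its single $k=0$ term. For the inductive step I condition on the value $s=x_n$ of the last coordinate. By Lemma \ref{lem 10} (applied with $n$ in the role of $n-1$), for $s<u$ the event $\{a\leq_r X\}$ is the conjunction of $x_{n-1}\leq\frac{a_{n-1}}{a_n}s$ with $a'\leq_r(x_0,\dots,x_{n-1})$ in $\Delta^{n-1,u-s}$, where $a'=(a_0,\dots,a_{n-1})/v$ and $v=\frac{a_0+\cdots+a_{n-1}}{u-s}$. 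Slicing by $x_n=s$ and matching the volume factors exactly as in the proof of Lemma \ref{lem 3} gives
$$P[X\geq_r a,\,x_n\leq b]=\frac{n}{u^n}\int_{a_n}^{b}(u-s)^{n-1}\,P'_s\,ds,\qquad P'_s=P\big[X'\geq_r a',\,x'_{n-1}\leq\tfrac{a_{n-1}}{a_n}s\big],$$
the probability being computed in $\Delta^{n-1,u-s}$ with bound $b'=\frac{a_{n-1}}{a_n}s$; the lower limit $a_n$ is forced since $X'\geq_r a'$ already entails $x'_{n-1}\geq a'_{n-1}$, and a short computation shows $b'\geq a'_{n-1}$ iff $s\geq a_n$.

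To invoke the inductive hypothesis on $P'_s$ I must locate $b'$ in the partition of $[0,u-s]$, that is, pin down the index $m'$. This is exactly the content of the remark following Lemma \ref{lem 10}: the homogeneity of $a'$ yields $\frac{a'_{n-1}}{\sum_{i=m'}^{n-1}a'_i}=\frac{a_{n-1}}{\sum_{i=m'}^{n-1}a_i}$, and the equivalence displayed there shows that $b'$ lies in regime $m'$ precisely when $\frac{a_n}{\sum_{i=m'-1}^{n}a_i}u\leq s\leq\frac{a_n}{\sum_{i=m'}^{n}a_i}u$. Hence, as $s$ runs from $a_n$ to $b$ it sweeps the regimes $m'=1,\dots,m$, so I split the integral at the points $s=\frac{a_n}{\sum_{i=m'}^{n}a_i}u$ and insert the inductive formula with the correct $m'$ on each piece; the topmost regime, where $b'>u-s$ and the bound is vacuous, is handled by the remark and contributes only the full dominance probability.

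Two simplifications make each sub-integral elementary. First, homogeneity of $a'$ turns the product term of the inductive formula into the $s$-independent constant $\prod_{i=0}^{n-1}\frac{a_i}{\sum_{j=i}^{n-1}a_j}$. Second, substituting $b'=\frac{a_{n-1}}{a_n}s$ collapses each base of the inductive sum, $u-s-\frac{\sum_{i=n-1-k}^{n-1}a_i}{a_{n-1}}b'=u-\frac{\sum_{i=n-1-k}^{n}a_i}{a_n}s$, a power of a linear function of $s$. Integrating, the factor $(u-s)^{n-1}$ cancels the $(u-s)^{-(n-1)}$ carried by the subtracted part, and $\int(u-\beta s)^{n-1}ds$ raises each base to the $n$. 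In this way the product term produces $\frac{1}{u^n}\prod_{i=0}^{n-1}\frac{a_i}{\sum_{j=i}^{n-1}a_j}\big[(u-a_n)^n-(u-b)^n\big]$, whose $(u-b)^n$ part is exactly the $k=0$ term of the target sum, while the subtracted terms supply the terms with $k\geq 1$.

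The main obstacle is the final consolidation of the $s$-independent pieces. Besides the $(u-a_n)^n=(\sum_{i=0}^{n-1}a_i)^n$ contribution, the splitting at the partition points $s=\frac{a_n}{\sum_{i=m'}^n a_i}u$ leaves boundary constants, each of the form of a partial sum $\big(\sum_{i}a_i\big)^n$ divided by nested products of tail sums of the $a_i$. None of these equals the claimed leading term $\prod_{i=0}^{n}\frac{a_i}{\sum_{j=i}^{n}a_j}$ individually; they must be shown to telescope and sum to it. This is precisely where Lemma \ref{lem; comb} enters: its vanishing alternating identity annihilates the accumulated boundary contributions down to the single product term. I expect the bulk of the work to be the careful bookkeeping of these boundary constants across the regime partition and verifying that their alternating combination is exactly of the shape to which Lemma \ref{lem; comb} applies.
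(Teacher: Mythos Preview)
Your proposal is correct and follows essentially the same route as the paper's own proof: induction on $n$ with the base case $n=1$ checked directly, conditioning on the last coordinate via Lemma~\ref{lem 10}, splitting the range of $s$ according to the partition described in the remark, applying the inductive hypothesis on each piece, and finally collapsing the accumulated boundary constants by means of Lemma~\ref{lem; comb}. The two simplifications you single out---homogeneity of $a'$ rendering the product term $s$-independent, and the reduction of each base to $u-\frac{\sum_{i=n-1-k}^{n}a_i}{a_n}s$---are exactly the mechanisms the paper uses, and your identification of the telescoping of boundary terms as the place where the real bookkeeping sits is on target.
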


\begin{proof}
We proceed by induction on $n$. For $n=1$ we have (necessarily, $m=1$), $$P[X\geq_{r}a,\;x_{1}\leq b]=\text{Vol}(\Delta^{1,u})^{-1}\int_{\{X\geq_{r}a,\;x_{1}\leq b\}}d\mu=$$ $$\frac{1}{\sqrt{2}u}\sqrt{2}\int_{a_{1}}^{b}dx_{1}=\frac{b-a_{1}}{u}=\frac{a_{0}a_{1}}{a_{1}(a_{0}+a_{1})}-\frac{1}{u}(u-\frac{a_1}{a_1}b)\frac{a_0a_1}{a_0a_1}.$$

Suppose the claim holds for $n\in\mathbb{N}$. We shall prove it for $n+1$. Let $(a_{0,}\ldots,a_{n+1})\in\Delta^{n+1,u}$ and let $a_{n+1}\leq b\leq u$. Let $1\leq m'\leq n+1$ such that $\frac{a_{n+1}}{\sum_{i=m'-1}^{n+1}a_{i}}u\leq b\leq\frac{a_{n+1}}{\sum_{i=m'}^{n+1}a_{i}}u$. Suppose $m'<n+1$ (the case $m'=n+1$ is similar). We have

$$P[X\geq_{r}a,x_{n+1}\leq b]=$$

$$\frac{(n+1)!}{u^{n+1}}\sum_{m=1}^{m'}\int_{\frac{a_{n+1}}{\sum_{i=m-1}^{n+1}a_{i}}u}^{\min\{\frac{a_{n+1}}{\sum_{i=m}^{n+1}a_{i}}u,b\}}\frac{(u-x_{n+1})^{n}}{n!}\bigg[\Bigg(\prod_{i=0}^{n}\frac{a_{i}}{\sum_{j=i}^{n}a_{j}}\Bigg)-$$

$$\sum_{k=0}^{n-m}(-1)^{k}\frac{(u-x_{n+1}-\frac{\sum_{i=n-k}^{n}a_{i}}{a_{n}}\frac{a_{n}}{a_{n+1}}x_{n+1})^{n}}{(u-x_{n+1})^{n}} \frac{\prod_{i=0}^na_i}{\prod_{i=0}^{n-k-1}(\sum_{j=i}^{n-k-1}a_j)\prod_{i=0}^k(\sum_{j=n-k}^{n-i}a_j)}\bigg]dx_{n+1}=$$

$$\frac{(n+1)!}{u^{n+1}}\sum_{m=1}^{m'}\int_{\frac{a_{n+1}}{\sum_{i=m-1}^{n+1}a_{i}}u}^{\min\{\frac{a_{n+1}}{\sum_{i=m}^{n+1}a_{i}}u,b\}}\frac{(u-x_{n+1})^{n}}{n!}\bigg[\Bigg(\prod_{i=0}^{n}\frac{a_{i}}{\sum_{j=i}^{n}a_{j}}\Bigg)-$$

$$\sum_{k=0}^{n-m}(-1)^{k}\frac{(u-\frac{\sum_{i=n-k}^{n+1}a_{i}}{a_{n+1}}x_{n+1})^{n}}{(u-x_{n+1})^{n}}\frac{\prod_{i=0}^na_i}{\prod_{i=0}^{n-k-1}(\sum_{j=i}^{n-k-1}a_j)\prod_{i=0}^k(\sum_{j=n-k}^{n-i}a_j)})\bigg]dx_{n+1}=$$

$$\frac{1}{u^{n+1}}\sum_{m=1}^{m'}\bigg[\Big((u-\frac{a_{n+1}}{\sum_{i=m-1}^{n+1}a_{i}}u)^{n+1}-(u-\min\{\frac{a_{n+1}}{\sum_{i=m}^{n+1}a_{i}}u,b\})^{n+1}\Big)\Bigg(\prod_{i=0}^{n}\frac{a_{i}}{\sum_{j=i}^{n}a_{j}}\Bigg)-$$

$$\sum_{k=0}^{n-m}(-1)^{k}\Big((u-\frac{\sum_{i=n-k}^{n+1}a_{i}}{a_{n+1}}\frac{a_{n+1}}{\sum_{i=m-1}^{n+1}a_{i}}u)^{n+1}-$$

$$(u-\frac{\sum_{i=n-k}^{n+1}a_{i}}{a_{n+1}}\min\{\frac{a_{n+1}}{\sum_{i=m}^{n+1}a_{i}}u,b\})^{n+1}\Big)\frac{a_{n+1}}{\sum_{i=n-k}^{n+1}a_{i}}\frac{\prod_{i=0}^na_i}{\prod_{i=0}^{n-k-1}(\sum_{j=i}^{n-k-1}a_j)\prod_{i=0}^k(\sum_{j=n-k}^{n-i}a_j)}\bigg]=$$

$$\frac{1}{u^{n+1}}\bigg[\Big((u-a_{n+1})^{n+1}-(u-b)^{n+1}\Big)\Bigg(\prod_{i=0}^{n}\frac{a_{i}}{\sum_{j=i}^{n}a_{j}}\Bigg)-$$

$$\sum_{m=1}^{m'}\sum_{k=0}^{n-m}(-1)^{k}\Big((u-\frac{\sum_{i=n-k}^{n+1}a_{i}}{\sum_{i=m-1}^{n+1}a_{i}}u)^{n+1}-$$

$$(u-\frac{\sum_{i=n-k}^{n+1}a_{i}}{a_{n+1}}\min\{\frac{a_{n+1}}{\sum_{i=m}^{n+1}a_{i}}u,b\})^{n+1}\Big)\frac{\prod_{i=0}^{n+1}a_i}{\prod_{i=0}^{n-k-1}(\sum_{j=i}^{n-k-1}a_j)\prod_{i=0}^{k+1}(\sum_{j=n-k}^{n+1-i}a_j)}\bigg]=$$

$$\frac{1}{u^{n+1}}\bigg[\Big((u-a_{n+1})^{n+1}-(u-b)^{n+1}\Big)\Bigg(\prod_{i=0}^{n}\frac{a_{i}}{\sum_{j=i}^{n}a_{j}}\Bigg)-$$

$$\sum_{k=0}^{n-m'}(-1)^{k}\Big((u-\frac{\sum_{i=n-k}^{n+1}a_{i}}{\sum_{i=m'-1}^{n+1}a_{i}}u)^{n+1}-(u-\frac{\sum_{i=n-k}^{n+1}a_{i}}{a_{n+1}}b)^{n+1}\Big)\cdot$$ $$\frac{\prod_{i=0}^{n+1}a_i}{\prod_{i=0}^{n-k-1}(\sum_{j=i}^{n-k-1}a_j)\prod_{i=0}^{k+1}(\sum_{j=n-k}^{n+1-i}a_j)}-$$

$$\sum_{m=1}^{m'-1}\sum_{k=0}^{n-m}(-1)^{k}\Big((u-\frac{\sum_{i=n-k}^{n+1}a_{i}}{\sum_{i=m-1}^{n+1}a_{i}}u)^{n+1}-(u-\frac{\sum_{i=n-k}^{n+1}a_{i}}{\sum_{i=m}^{n+1}a_{i}}u)^{n+1}\Big)\cdot$$ $$\frac{\prod_{i=0}^{n+1}a_i}{\prod_{i=0}^{n-k-1}(\sum_{j=i}^{n-k-1}a_j)\prod_{i=0}^{k+1}(\sum_{j=n-k}^{n+1-i}a_j)}\bigg]=$$

$$\frac{1}{u^{n+1}}\bigg[\Big((u-a_{n+1})^{n+1}\Bigg(\prod_{i=0}^{n}\frac{a_{i}}{\sum_{j=i}^{n}a_{j}}\Bigg)-$$

$$\sum_{k=0}^{n-m'}(-1)^{k}(u-\frac{\sum_{i=n-k}^{n+1}a_{i}}{\sum_{i=m'-1}^{n+1}a_{i}}u)^{n+1}\frac{\prod_{i=0}^{n+1}a_i}{\prod_{i=0}^{n-k-1}(\sum_{j=i}^{n-k-1}a_j)\prod_{i=0}^{k+1}(\sum_{j=n-k}^{n+1-i}a_j)}-$$

$$\sum_{m=1}^{m'-1}\sum_{k=0}^{n-m}(-1)^{k}\Big((u-\frac{\sum_{i=n-k}^{n+1}a_{i}}{\sum_{i=m-1}^{n+1}a_{i}}u)^{n+1}-(u-\frac{\sum_{i=n-k}^{n+1}a_{i}}{\sum_{i=m}^{n+1}a_{i}}u)^{n+1}\Big)\cdot$$ $$\frac{\prod_{i=0}^{n+1}a_i}{\prod_{i=0}^{n-k-1}(\sum_{j=i}^{n-k-1}a_j)\prod_{i=0}^{k+1}(\sum_{j=n-k}^{n+1-i}a_j)}-$$

$$\sum_{k=0}^{n+1-m'}(-1)^{k}(u-\frac{\sum_{i=n+1-k}^{n+1}a_{i}}{a_{n+1}}b)^{n+1}\frac{\prod_{i=0}^{n+1}a_i}{\prod_{i=0}^{n-k}(\sum_{j=i}^{n-k}a_j)\prod_{i=0}^{k}(\sum_{j=n+1-k}^{n+1-i}a_j)}\bigg]\stackrel{(*)}{=}$$

$$\frac{1}{u^{n+1}}\bigg[\Big((u-a_{n+1})^{n+1}\Bigg(\prod_{i=0}^{n}\frac{a_{i}}{\sum_{j=i}^{n}a_{j}}\Bigg)-$$

$$\sum_{k=0}^{n-m'}(-1)^{k}(u-\frac{\sum_{i=n-k}^{n+1}a_{i}}{\sum_{i=m'-1}^{n+1}a_{i}}u)^{n+1}\frac{\prod_{i=0}^{n+1}a_i}{\prod_{i=0}^{n-k-1}(\sum_{j=i}^{n-k-1}a_j)\prod_{i=0}^{k+1}(\sum_{j=n-k}^{n+1-i}a_j)}-$$

$$\sum_{k=0}^{n-m'}(-1)^{k}\Big((\sum_{i=0}^{n-k-1}a_{i})^{n+1}-(u-\frac{\sum_{i=n-k}^{n+1}a_{i}}{\sum_{i=m'-1}^{n+1}a_{i}}u)^{n+1}\Big)\frac{\prod_{i=0}^{n+1}a_i}{\prod_{i=0}^{n-k-1}(\sum_{j=i}^{n-k-1}a_j)\prod_{i=0}^{k+1}(\sum_{j=n-k}^{n+1-i}a_j)}-$$

$$\sum_{k=n-m'+1}^{n-1}(-1)^{k}(\sum_{i=0}^{n-k-1}a_{i})^{n+1}\frac{\prod_{i=0}^{n+1}a_i}{\prod_{i=0}^{n-k-1}(\sum_{j=i}^{n-k-1}a_j)\prod_{i=0}^{k+1}(\sum_{j=n-k}^{n+1-i}a_j)}-$$

$$\sum_{k=0}^{n+1-m'}(-1)^{k}(u-\frac{\sum_{i=n+1-k}^{n+1}a_{i}}{a_{n+1}}b)^{n+1}\frac{\prod_{i=0}^{n+1}a_i}{\prod_{i=0}^{n-k}(\sum_{j=i}^{n-k}a_j)\prod_{i=0}^{k}(\sum_{j=n+1-k}^{n+1-i}a_j)}\bigg] = $$

$$\frac{1}{u^{n+1}}\bigg[\Big((u-a_{n+1})^{n+1}\Bigg(\prod_{i=0}^{n}\frac{a_{i}}{\sum_{j=i}^{n}a_{j}}\Bigg)-$$

$$\sum_{k=0}^{n-1}(-1)^{k}(\sum_{i=0}^{n-k-1}a_{i})^{n+1}\frac{\prod_{i=0}^{n+1}a_i}{\prod_{i=0}^{n-k-1}(\sum_{j=i}^{n-k-1}a_j)\prod_{i=0}^{k+1}(\sum_{j=n-k}^{n+1-i}a_j)}-$$

$$\sum_{k=0}^{n+1-m'}(-1)^{k}(u-\frac{\sum_{i=n+1-k}^{n+1}a_{i}}{a_{n+1}}b)^{n+1}\frac{\prod_{i=0}^{n+1}a_i}{\prod_{i=0}^{n-k}(\sum_{j=i}^{n-k}a_j)\prod_{i=0}^{k}(\sum_{j=n+1-k}^{n+1-i}a_j)}\bigg]=$$

$$\frac{1}{u^{n+1}}\sum_{k=0}^{n}(-1)^{k}(\sum_{i=0}^{n-k}a_{i})^{n+1}\frac{\prod_{i=0}^{n+1}a_i}{\prod_{i=0}^{n-k}(\sum_{j=i}^{n-k}a_j)\prod_{i=0}^{k}(\sum_{j=n+1-k}^{n+1-i}a_j)}-$$ $$\frac{1}{u^{n+1}}\sum_{k=0}^{n+1-m'}(-1)^{k}(u-\frac{\sum_{i=n+1-k}^{n+1}a_{i}}{a_{n+1}}b)^{n+1}\frac{\prod_{i=0}^{n+1}a_i}{\prod_{i=0}^{n-k}(\sum_{j=i}^{n-k}a_j)\prod_{i=0}^{k}(\sum_{j=n+1-k}^{n+1-i}a_j)}.$$

We are done if we show that $$\frac{1}{u^{n+1}}\sum_{k=0}^{n}(-1)^{k}(\sum_{i=0}^{n-k}a_{i})^{n+1}\frac{\prod_{i=0}^{n+1}a_i}{\prod_{i=0}^{n-k}(\sum_{j=i}^{n-k}a_j)\prod_{i=0}^{k}(\sum_{j=n+1-k}^{n+1-i}a_j)}=\prod_{i=0}^{n+1}\frac{a_i}{\sum_{j=i}^{n+1} a_j}.$$

Using that $u=\sum_{i=0}^{n+1}a_i$ one easily sees that the above identity is equivalent to

$$\sum_{k=0}^{n+1}(-1)^{k}\frac{(\sum_{i=0}^{n+1-k}a_{i})^{n+1}}{\prod_{i=0}^{n+1-k}(\sum_{j=i}^{n+1-k}a_j)\prod_{i=0}^{k-1}(\sum_{j=n+2-k}^{n+1-i}a_j)}=0$$ and this identity holds by Lemma \ref{lem; comb}.
 $$ $$

$(*) \sum_{m=1}^{m'-1}\sum_{k=0}^{n-m}=\sum_{k=0}^{n-m'}\sum_{m=1}^{m'-1}+\sum_{k=n-m'+1}^{n-1}\sum_{m=1}^{n-k}.$
\end{proof}

As a special case of the previous lemma we obtain what we actually wanted:

\begin{prop}
Let $a=(a_0,\ldots,a_n)\in\Delta^{n,u}$. Then 
$$P[X\geq_r a]= \prod_{i=0}^n\frac{a_i}{\sum_{j=i}^n a_j}.$$
\end{prop}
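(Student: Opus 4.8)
The plan is to recover $P[X\geq_r a]$ from the previous lemma by removing the auxiliary constraint $x_n\leq b$, which is achieved simply by pushing $b$ up to the top of its range. Since every $X=(x_0,\ldots,x_n)\in\Delta^{n,u}$ automatically satisfies $x_n\leq u$, choosing $b=u$ makes the event $\{x_n\leq b\}$ the whole simplex, so that $P[X\geq_r a]=P[X\geq_r a,\;x_n\leq u]$. It therefore suffices to evaluate the right-hand side of the previous lemma at $b=u$.

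First I would locate $b=u$ within the partition of $[a_n,u]$ that indexes the parameter $m$. The right endpoint of the interval attached to a given $m$ is $\frac{a_n}{\sum_{i=m}^n a_i}u$, and for $m=n$ this equals $\frac{a_n}{a_n}u=u$. Hence $b=u$ is precisely the top endpoint of the interval corresponding to $m=n$, and this is the value of $m$ to substitute into the formula.

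With $m=n$ the correction sum $\sum_{k=0}^{n-m}$ appearing in the previous lemma collapses to its single term $k=0$. In that term the relevant factor is $\bigl(u-\frac{\sum_{i=n}^n a_i}{a_n}b\bigr)^n=\bigl(u-\frac{a_n}{a_n}u\bigr)^n=(u-u)^n=0$. Thus the entire correction vanishes and only the leading term $\prod_{i=0}^n\frac{a_i}{\sum_{j=i}^n a_j}$ survives, which is exactly the claimed formula.

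The derivation is short, and the only points requiring care are bookkeeping ones: verifying that $b=u$ really corresponds to $m=n$ (rather than to some interior $m$) and confirming that the surviving $k=0$ summand carries the vanishing factor $(u-b)^n$, so that the value of the accompanying product of reciprocal tail-sums is irrelevant. One should also note the implicit assumption that the tail sums $\sum_{j=i}^n a_j$ are positive, inherited from the previous lemma. There is no genuine analytic obstacle at this stage, as the substantive work was already carried out in establishing the previous lemma; this proposition is merely its boundary specialization.
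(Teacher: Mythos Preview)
Your proposal is correct and follows exactly the paper's own approach: the paper simply states that the proposition is ``a special case of the previous lemma,'' and you have filled in precisely the specialization $b=u$, $m=n$, $k=0$ that makes the correction term vanish. There is nothing to add.
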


As a last preperation for our main result, we have the following lemma:

\begin{lem}\label{lem; int}
Let $a=(a_0,\ldots,a_n)\in\Delta^{n,u}$. Then

$$\int_0^u\int_0^{u-a_1} \cdots\int_0^{u-a_1-\cdots-a_{n-1}} \prod_{i=1}^n\frac{a_i}{\sum_{j=i}^n a_j}\text{d}a_n\cdots\text{d}a_1=\frac{u^n}{(n!)^2}.$$ 
\end{lem}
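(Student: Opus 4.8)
The plan is to evaluate the integral directly by means of the stick-breaking change of variables that linearizes the nested denominators. Writing $S_i=\sum_{j=i}^n a_j$ for the partial sums, the domain of integration is exactly the simplex $\{a_i\ge 0,\ a_1+\cdots+a_n\le u\}$ and the integrand is $\prod_{i=1}^n a_i/S_i$; note first that its last factor is identically $1$, since $S_n=a_n$. Because each factor $a_i/S_i$ is homogeneous of degree $0$ while $\text{d}a_1\cdots\text{d}a_n$ scales like $u^n$, one sees immediately that the integral equals $u^n$ times its value at $u=1$, so the $u$-dependence is accounted for and only a numerical constant has to be identified.

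First I would introduce the new variables $S_1\in[0,u]$ together with the ratios $r_i=S_{i+1}/S_i\in[0,1]$ for $1\le i\le n-1$. The inverse map is $a_i=S_i(1-r_i)$ with $S_i=S_1\prod_{j=1}^{i-1}r_j$, and this is a bijection (up to a null set) from the simplex onto the box $[0,u]\times[0,1]^{n-1}$. The key computation is the Jacobian: decomposing the map into the successive elementary substitutions $(S_i,r_i)\mapsto(a_i,S_{i+1})$, each of which contributes a factor $S_i$, I expect
$$\left|\frac{\partial(a_1,\ldots,a_n)}{\partial(S_1,r_1,\ldots,r_{n-1})}\right|=\prod_{i=1}^{n-1}S_i=S_1^{\,n-1}\prod_{j=1}^{n-2}r_j^{\,n-1-j}.$$
Establishing this Jacobian formula cleanly is the main obstacle; everything downstream is routine.

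In the new coordinates the integrand becomes $\prod_{i=1}^{n-1}(1-r_i)$, and after multiplying by the Jacobian the integral factorizes completely into one power integral in $S_1$ and $n-1$ independent one-dimensional integrals in the $r_j$. Concretely I would compute $\int_0^u S_1^{n-1}\,\text{d}S_1=u^n/n$ and, for each $1\le j\le n-1$, the elementary Beta-type integral $\int_0^1(1-r)r^{\,n-1-j}\,\text{d}r=\frac{1}{(n-j)(n-j+1)}$, the exponent $n-1-j$ coming from the Jacobian and the factor $(1-r)$ from the integrand.

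Finally I would assemble the pieces: the product of these $n-1$ factors telescopes, since with $k=n-j$ it equals $\prod_{k=1}^{n-1}\frac{1}{k(k+1)}=\frac{1}{(n-1)!\,n!}$, so that the whole integral is $\frac{u^n}{n}\cdot\frac{1}{(n-1)!\,n!}=\frac{u^n}{(n!)^2}$, as claimed; as a consistency check the cases $n=1,2,3$ give $u$, $u^2/4$, $u^3/36$, matching $u^n/(n!)^2$. An alternative route is induction on $n$ by peeling off the outermost integral $\int_0^u\cdots\,\text{d}a_1$; however, the leading factor $a_1/S_1$ couples $a_1$ to the inner partial sum $S_2$, so the inner integral does not reduce directly to the analogous $(n-1)$-dimensional integral, and one would have to carry along the joint dependence on $S_2$ — precisely the difficulty that the stick-breaking substitution sidesteps.
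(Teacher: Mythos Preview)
Your proposal is correct and follows essentially the same route as the paper: both use the stick-breaking substitution (the paper writes it as $\prod_{i=1}^k y_i=\sum_{j=k}^n a_j$, which is exactly your $(S_1,r_1,\ldots,r_{n-1})$ up to the relabeling $y_1=S_1$, $y_{i+1}=r_i$) to factorize the integral into a power integral in $S_1$ and independent Beta-type integrals in the ratios. The only difference is cosmetic --- your recursive computation of the Jacobian via the elementary steps $(S_i,r_i)\mapsto(a_i,S_{i+1})$ is a bit more explicit than the paper's statement $J=\prod_{i=1}^{n-1}y_i^{\,n-i}$, but the content is identical.
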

\begin{proof}
Consider the substitution induced by: $$\prod_{i=1}^ky_i=\sum_{j=k}^n a_j, \;\;1\leq k\leq n.$$ It holds:
$$a_i=
\begin{cases}

    (1-y_{i+1}) \prod_{k=1}^iy_k,& 1\leq i \leq n-1\\
    \prod_{k=1}^ny_k,              & i=n
\end{cases}$$ and the Jacobian is given by $J=\prod_{i=1}^{n-1} y_i^{n-i}$ (this substitution is a modification of the substitution described in \cite{Stu}). Thus, $$\int_0^u\int_0^{u-a_1} \cdots\int_0^{u-a_1-\cdots-a_{n-1}} \prod_{i=1}^n\frac{a_i}{\sum_{j=i}^n a_j}\text{d}a_n\cdots\text{d}a_1= $$ $$\int_0^u \int_0^1\cdots\int_0^1\Big(\prod_{i=2}^n(1-y_i)\Big)\Big(\prod_{i=1}^{n-1} y_i^{n-i}\Big) \text{d}y_n\cdots \text{d}y_1=$$ 
$$\int_0^uy_1^{n-1}\text{d}y_1 \int_0^1(1-y_2)y_2^{n-2}\text{d}y_1\cdots\int_0^1(1-y_{n-1})y_{n-1} \text{d}y_{n-1}\int_0^1(1-y_n)\text{d}y_n=$$ $$\frac{u^n}{n}\Big(\frac{1}{n-1}-\frac{1}{n}\Big)\cdots\Big(\frac{1}{2}-\frac{1}{3}\Big)\frac{1}{2}= \frac{u^n}{n}\prod_{i=2}^n\frac{1}{(i-1)i} = u^n\Big(\prod_{i=2}^n\frac{1}{i}\Big) \Big(\prod_{i=2}^{n+1}\frac{1}{i-1}\Big)=\frac{u^n}{(n!)^2}.$$

\end{proof}

\begin{theorem}\label{thm; 2}
Let $X_1, X_2$ be two random variables that uniformly take values in $\Delta^{n,u}$. Then $$P[X_1 \textnormal{ and } X_2 \textnormal{ are MLR comparable}]=\frac{2}{(n+1)!}.$$

\end{theorem}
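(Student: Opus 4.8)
The plan is to follow the template of Theorem~\ref{thm; 1}. First I would reduce the comparability probability to a single ordering probability: by inclusion--exclusion,
\[
P[X_1 \textnormal{ and } X_2 \textnormal{ are MLR comparable}] = P[X_1\leq_r X_2] + P[X_2\leq_r X_1] - P[X_1\leq_r X_2 \textnormal{ and } X_2\leq_r X_1].
\]
The last event means $X_1\leq_r X_2$ and $X_2\leq_r X_1$ simultaneously, which turns every defining inequality into an equality; thus $X_1$ and $X_2$ are proportional as vectors, and since both lie on $\Delta^{n,u}$ they must coincide. As $\{X_1=X_2\}$ has measure zero, this term vanishes, and because $X_1,X_2$ are i.i.d.\ the two remaining terms are equal, leaving $P[\cdots]=2P[X_1\leq_r X_2]$.

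Next I would compute $P[X_1\leq_r X_2]$ by conditioning on $X_1=a$ and invoking the formula $P[X\geq_r a]=\prod_{i=0}^n\frac{a_i}{\sum_{j=i}^n a_j}$ (the Proposition above):
\[
P[X_1\leq_r X_2] = \frac{1}{\textnormal{Vol}(\Delta^{n,u})}\int_{\Delta^{n,u}} \prod_{i=0}^n\frac{a_i}{\sum_{j=i}^n a_j}\,dS(a).
\]
I would parametrize the simplex by $a_1,\dots,a_n$ with $a_0 = u-\sum_{i=1}^n a_i$, so that the surface element is $dS=\sqrt{n+1}\,da_1\cdots da_n$ over the region $\Sigma=\{a_i\geq 0,\ \sum_{i=1}^n a_i\leq u\}$, consistent with Lemma~\ref{lem 2}. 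In the integrand the factor with $i=n$ equals $1$ and the factor with $i=0$ equals $\frac{a_0}{u}$, so it becomes $\frac{a_0}{u}\prod_{i=1}^n\frac{a_i}{\sum_{j=i}^n a_j}$.

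At this point the integral is exactly the one evaluated in Lemma~\ref{lem; int} except for the extra weight $\frac{a_0}{u}=\frac{u-\sum_{i=1}^n a_i}{u}$. Applying the same substitution $\prod_{i=1}^k y_i=\sum_{j=k}^n a_j$ used there, this weight becomes $\frac{u-y_1}{u}$ and affects only the outermost integral, replacing $\int_0^u y_1^{n-1}\,dy_1=\frac{u^n}{n}$ by $\int_0^u \frac{u-y_1}{u}\,y_1^{n-1}\,dy_1=\frac{u^n}{n(n+1)}$; that is, it multiplies the value of Lemma~\ref{lem; int} by $\frac{1}{n+1}$. Hence the integral equals $\frac{1}{n+1}\cdot\frac{u^n}{(n!)^2}$.

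Finally, using $\textnormal{Vol}(\Delta^{n,u})=\frac{\sqrt{n+1}}{n!}u^n$ I would conclude
\[
P[X_1\leq_r X_2] = \frac{n!}{\sqrt{n+1}\,u^n}\cdot \sqrt{n+1}\cdot\frac{1}{n+1}\cdot\frac{u^n}{(n!)^2} = \frac{1}{(n+1)!},
\]
and doubling gives $\frac{2}{(n+1)!}$. The step I expect to be the main obstacle is the third one: one must notice that the quantity to be integrated is \emph{not} literally Lemma~\ref{lem; int}, since the MLR formula carries the extra weight $\frac{a_0}{u}$ arising from the $i=0$ factor, and then re-run the substitution to see that this weight perturbs only the $y_1$-integral, contributing precisely the factor $\frac{1}{n+1}$. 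A secondary point requiring care is the measure-zero justification for discarding the two-sided comparability event.
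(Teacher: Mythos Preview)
Your proof is correct and follows the same outline as the paper's: reduce to $2P[X_1\leq_r X_2]$ by inclusion--exclusion, invoke the Proposition for $P[X\geq_r a]$, parametrize the simplex by $(a_1,\dots,a_n)$, and evaluate the resulting integral via the substitution of Lemma~\ref{lem; int}. The only divergence is in how the extra factor $\tfrac{a_0}{u}$ is absorbed. The paper uses the algebraic identity
\[
\frac{a_0}{u}\cdot\frac{a_1}{\sum_{j=1}^n a_j}=\frac{a_1}{\sum_{j=1}^n a_j}-\frac{a_1}{u}
\]
to split the integral into two pieces, applies Lemma~\ref{lem; int} to each (once in dimension $n$, once in dimension $n-1$ after integrating out $a_1$ against a Beta kernel), and then combines $\tfrac{2}{n!}-\tfrac{2n}{(n+1)!}=\tfrac{2}{(n+1)!}$. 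Your route---carrying $\tfrac{a_0}{u}=\tfrac{u-y_1}{u}$ directly through the substitution and observing that it perturbs only the $y_1$-integral, replacing $\int_0^u y_1^{n-1}\,dy_1$ by $\int_0^u \tfrac{u-y_1}{u}\,y_1^{n-1}\,dy_1=\tfrac{u^n}{n(n+1)}$---is slightly more direct and avoids the split, but the two computations are equivalent.
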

\begin{proof}
It holds $$P[X_1 \textnormal{ and } X_2 \textnormal{ are MLR comparable}]=$$ $$ P[X_1\leq_r X_2 \text{ or } X_2 \leq_r X_1] = $$ $$ P[X_1 \leq_r X_2] + P[X_2 \leq_r X_1]  - P[X_1 \leq_r X_2 \text { and } X_2 \leq_r X_1] =$$ $$ 2P[X_1 \leq_r X_2] - P[X_1 = X_2] = 2P[X_1 \leq_r X_2] = $$
$$ 2\int_{\Delta^{n,u}}\prod_{i=0}^n\frac{a_i}{\sum_{j=i}^n a_j}\frac{n!}{\sqrt{n+1}}u^{-n}\text{d}a =$$ 
$$ 2\sqrt{n+1}\int_0^u\int_0^{u-a_1} \cdots\int_0^{u-a_1-\cdots-a_{n-1}} \frac{u-a_1-\cdots-a_n}{u}\prod_{i=1}^n\frac{a_i}{\sum_{j=i}^n a_j} \frac{n!}{\sqrt{n+1}}u^{-n} \text{d}a_n\cdots\text{d}a_1=$$ 

$$ \frac{2n!}{u^n}\int_0^u\int_0^{u-a_1} \cdots\int_0^{u-a_1-\cdots-a_{n-1}}\prod_{i=1}^n\frac{a_i}{\sum_{j=i}^n a_j}\text{d}a_n\cdots\text{d}a_1-$$ 

$$ \frac{2n!}{u^{n+1}}\int_0^ua_1\int_0^{u-a_1} \cdots\int_0^{u-a_1-\cdots-a_{n-1}} \prod_{i=2}^n\frac{a_i}{\sum_{j=i}^n a_j} \text{d}a_n\cdots\text{d}a_1\overset{\normalsize \textnormal{Lemma } \ref{lem; int}}{=}$$ 

$$ \frac{2n!}{u^n}\frac{u^n}{(n!)^2}-\frac{2n!}{u^{n+1}}\int_0^ua_1\frac{(u-a_1)^{n-1}}{((n-1)!)^2}\text{d}a_1=$$ 

$$ \frac{2n!}{u^n}\frac{u^n}{(n!)^2}-\frac{2n!u^{n+1}}{u^{n+1}((n-1)!)^2n(n+1)}=$$ 
$$ \frac{2}{n!}-\frac{2}{(n-1)!(n+1)}= \frac{2}{n!}-\frac{2n}{(n+1)!}=\frac{2}{(n+1)!}$$ 
\end{proof}

\end{document}